\author{Tom\'as Ibarluc\'ia}
\author{Julien Melleray}
\address{Universit\'e de Lyon\\
  Universit\'e Claude Bernard -- Lyon 1 \\
  Institut Camille Jordan, CNRS UMR 5208 \\
  43 boulevard du 11 novembre 1918 \\
  69622 Villeurbanne Cedex \\
  France}
\thanks{Research partially supported by Agence Nationale de la Recherche projects GruPoLoCo (ANR-11-JS01-0008) and ValCoMo (ANR-13-BS01-0006).}
\numberwithin{equation}{section}
\newcommand{\Clop}{\text{Clop}}
\newcommand{\Prob}{\text{Prob}}
\title{Dynamical simplices and minimal homeomorphisms}
\begin{document}
\begin{abstract}
We give a characterization of sets $K$ of probability measures on a Cantor space $X$ with the property that there exists a minimal homeomorphism $g$ of $X$ such that the set of $g$-invariant probability measures on $X$ coincides with $K$. This extends theorems of Akin (corresponding to the case when $K$ is a singleton) and Dahl (when $K$ is finite-dimensional). Our argument is elementary and different from both Akin's and Dahl's.
\end{abstract}
\maketitle


\section{Introduction}
The study of minimal homeomorphisms (those for which all orbits are dense) on a Cantor space is a suprisingly rich and active domain of research. In a foundational series of papers (see \cite{Herman1992}, \cite{Giordano1995} and \cite{Giordano1999}), Giordano, Herman, Putnam and Skau have pursued the analysis of minimal actions of $\Z$ (and later $\Z^d$), and developed a deep theory. In particular, it is proved in \cite{Giordano1995} that the partition of a Cantor space $X$ induced by the orbits of a minimal homeomorphism $g$ is completely determined, up to a homeomorphism of $X$, by the collection of all $g$-invariant measures. 

Gaining a better understanding of sets of invariant measures then becomes a natural concern, and that is our object of study here: given a Cantor space $X$, and a simplex $K$ of probability measures, when does there exist a minimal homeomorphism $g$ of $X$ such that $K$ is exactly the simplex of all $g$-invariant measures? Downarowicz \cite{Downarowicz1991} proved that any abstract Choquet simplex can be realized in this way; here we are not given $K$ as an abstract simplex, but already as a simplex of measures, so the problem has a different flavour. 

A theorem of Glasner--Weiss \cite{Glasner1995a} imposes a necessary condition: if $g$ is a minimal homeomorphism, $K$ is the simplex of all $g$-invariant measures, and $A,B$ are clopen subsets of $X$ such that $\mu(A) < \mu(B)$ for all $\mu \in K$, then there exists a clopen subset $C \subseteq B$ such that $\mu(C)=\mu(A)$ for all $\mu \in K$. This is already a strong, nontrivial assumption when $K$ is a singleton; in that case the Glasner--Weiss condition is essentially sufficient, as was proved by Akin.

\begin{theorem*}[Akin \cite{Akin2005}]
Assume that $\mu$ is a probability measure on a Cantor space $X$ which is atomless, has full support, and is \emph{good}, that is, for any clopen sets $A,B$, if $\mu(A) < \mu(B)$ then there exists a clopen $C \subseteq B$ such that $\mu(C)=\mu(A)$.

Then there exists a minimal homeomorphism $g$ of $X$ such that the unique $g$-invariant measure is $\mu$.
\end{theorem*}

Following Akin, we say that a simplex is \emph{good} if it satisfies the necessary condition established by Glasner and Weiss. Akin's theorem suggests that, modulo some simple additional necessary conditions, any good simplex of measures could be the simplex of all invariant measures for some minimal homeomorphism. This idea is further reinforced by an unpublished result of Dahl, which generalizes Akin's theorem.

\begin{theorem*}[Dahl \cite{Dahl2008a}]
Let $K$ be a Choquet simplex made up of atomless probability measures with full support on a Cantor space $X$. Assume that $K$ is good and has finitely many extreme points, which are mutually singular. Then there exists a minimal homeomorphism whose set of invariant probability measures coincides with $K$.
\end{theorem*}

Dahl actually obtains a more general result. 
To formulate it, we recall her notation: given a simplex $K$ of probability measures on a Cantor space $X$, let $\text{Aff}(K)$ denote the set of all continuous affine functions on $K$, and $G(K) \subseteq \text{Aff}(K)$
be the set of all functions $\mu \mapsto \int_X f d\mu$, where $f$ belongs to $C(X,\Z)$.

\begin{theorem*}[Dahl \cite{Dahl2008a}]
Let $K$ be a Choquet simplex made up of atomless probability measures with full support on~$X$. Assume $K$ is good and the extreme points of $K$ are mutually singular. If $G(K)$ is dense in $\text{Aff}(K)$, then there exists a minimal homeomorphism whose set of invariant measures is exactly $K$.
\end{theorem*}

As pointed out in the third section of \cite{Dahl2008a}, it follows from Theorem 4.4 in \cite{Effros1981} that $G(K)$ being dense in $\text{Aff}(K)$ is necessary for $G(K)$ to be a so-called simple dimension group, which is in turn necessary for the existence of $g$ as above. The other conditions are also necessary, so Dahl could have formulated her theorem as an equivalence.

Using Lyapunov's theorem, Dahl proves that any finite-dimensional Choquet simplex of probability measures on $X$ with mutually singular extreme points is such that $G(K)$ is uniformly dense in $\text{Aff}(K)$, thus deducing the theorem we stated previously from the one we just quoted. Dahl's proof of her second theorem above uses some high-powered machinery following the Giordano--Herman--Putnam--Skau approach to topological dynamics via dimension groups, $K$-theory, Bratteli diagrams and Bratteli--Vershik maps. By contrast, Akin's proof is elementary and rather explicit, though somewhat long.

Here, we take an approach which is different from both Akin's and Dahl's: we build a minimal homeomorphism preserving a prescribed set of probability measures by constructing inductively a sequence of partitions which will turn out to be Kakutani--Rokhlin partitions for that homeomorphism (we recall the definition of Kakutani--Rokhlin partitions and other basic notions of topological dynamics in the next section). While pursuing this approach, we unearthed a necessary condition for $K$ to be the simplex of invariant measures for some minimal homeomorphism. This led us to the following definition.

\begin{defn*}
We say that a nonempty set $K$ of probability measures on a Cantor space $X$ is a \emph{dynamical  simplex} if it satisfies the following conditions:
\begin{itemize}
\item $K$ is compact and convex.
\item All elements of $K$ are atomless and have full support.
\item $K$ is good.
\item $K$ is \emph{approximately divisible}, i.e.\ for any clopen $A$, any integer $n$ and any $\varepsilon >0$, there exists a clopen $B \subseteq A$ such that $n \mu(B) \in [\mu(A)-\varepsilon,\mu(A)]$ for all $\mu \in K$.
\end{itemize}
\end{defn*}

Note that, assuming that $K$ is good, the assumption that $B \subseteq A$ in the last item above is redundant; we nevertheless include it because this is how approximate divisibility is used in our arguments.

We borrow the terminology ``dynamical simplex'' from Dahl, but our definition is different. Using Lyapunov's theorem as in \cite{Dahl2008a}, it is easy to see that the condition of approximate divisibility is redundant when $K$ is finite dimensional. The main result of this paper is the following.

\begin{theorem} \label{t:mainresult}
Given a simplex $K$ of probability measures on a Cantor space $X$, there exists a minimal homeomorphism $g$ whose set of invariant measures is $K$ if, and only if, $K$ is a dynamical simplex.
\end{theorem}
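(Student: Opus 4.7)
\emph{Necessity.} Suppose $g$ is a minimal homeomorphism of $X$ with set of invariant probability measures $K$. Compactness and convexity of $K$ follow from Banach--Alaoglu, while minimality guarantees each $\mu\in K$ is atomless (no periodic orbits) and has full support (support is closed and $g$-invariant). Goodness is exactly the Glasner--Weiss theorem recalled above. The only new content is approximate divisibility, which I would obtain from Kakutani--Rokhlin partitions: given a clopen $A$, an integer $n$ and $\varepsilon>0$, choose a KR partition refining $\{A,X\setminus A\}$ whose tower heights are all $\geq n$ and whose bases have total measure $\leq\varepsilon/n$ uniformly for $\mu\in K$ (possible by compactness of $K$ together with the existence of arbitrarily fine KR partitions of a minimal system). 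Then $A$ decomposes as a union of consecutive blocks of levels; selecting every $n$-th level in each block yields $B\subseteq A$ with $n\mu(B)\in[\mu(A)-\varepsilon,\mu(A)]$ for all $\mu\in K$, as required.

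\emph{Sufficiency.} Given a dynamical simplex $K$, the plan is to build $g$ as the common limit of the ``tower-cycling'' maps associated to a refining sequence $(\mathcal{P}_n)_{n\geq 1}$ of Kakutani--Rokhlin partitions of $X$, constructed inductively to satisfy: (i) the atoms of $\bigcup_n\mathcal{P}_n$ generate the topology of $X$; (ii) each $\mu\in K$ assigns equal measure to the levels of every tower of $\mathcal{P}_n$; (iii) the total mass of the bases of $\mathcal{P}_n$ tends to $0$ uniformly on $K$; (iv) $\mathcal{P}_{n+1}$ refines $\mathcal{P}_n$ compatibly with the KR structure, each new tower being a stack of old ones; (v) the base-measure vectors $(\mu(T_{n,i,0}))_i$ for $\mu\in K$ fill out the space of all possible profiles of probability measures that equidistribute on every $\mathcal{P}_n$. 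From (i) and (iv) one gets a well-defined homeomorphism $g$; from (iii), $g$ is minimal; from (ii), every $\mu\in K$ is $g$-invariant. Condition (v) ensures the converse: any $g$-invariant $\nu$ equidistributes on the levels of every $\mathcal{P}_n$, so on each $\sigma(\mathcal{P}_n)$ it agrees with some $\mu_n\in K$, and compactness of $K$ together with (i) forces $\nu\in K$.

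\emph{Main obstacle.} The heart of the argument will be the inductive step: given $\mathcal{P}_n$ with prescribed base-measure functions $c_{n,i}\in G(K)$, produce $\mathcal{P}_{n+1}$ inheriting properties (i)--(v). Here the two remaining axioms of a dynamical simplex enter as the crucial tools: goodness allows one to find a clopen realising a prescribed continuous affine measure function on $K$, and approximate divisibility allows one to split a clopen into approximately equal pieces --- indispensable whenever the new tower heights are not multiples of the old. Combining these, one can cut each base of $\mathcal{P}_n$ into sub-bases that reassemble, under the KR refinement rule, into the new towers of $\mathcal{P}_{n+1}$ with the correct measure functions. The delicate bookkeeping --- simultaneously ensuring that bases shrink (for minimality), that prescribed clopens enter the algebra (for (i) and (v)), and that the base-profile image of $K$ is preserved at each step --- is where I expect the technical difficulty to lie. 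The most subtle point is property (v): one must design the prescribed base functions $c_{n,i}$ so that the finite-dimensional shadow of $K$ on each $\mathcal{P}_n$ exhausts all positive vectors consistent with the equidistribution and total-mass constraints, which will require using goodness and approximate divisibility not only to realise each $c_{n,i}$ but also to guarantee that no spurious profile survives in the inverse limit.
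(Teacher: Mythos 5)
Your necessity argument is essentially the paper's (Glasner--Weiss for goodness; a Kakutani--Rokhlin partition with high towers and small base for approximate divisibility), and it works provided you select every $n$-th level \emph{among those contained in $A$} within each tower, taking $\lfloor m/n\rfloor$ of them so that the error $\mu(A)-n\mu(B)$ is nonnegative and bounded by $(n-1)$ times the base measure.

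For sufficiency, your construction philosophy matches the paper's: build $g$ as the common extension of the partial maps attached to a refining sequence of KR-like partitions (clopen towers whose levels are $\sim_K$-equivalent), using goodness and approximate divisibility to cut and stack. Conditions (i)--(iv) are achievable and are close to what the paper's Propositions \ref{prop:towerdiameter} and \ref{prop:towerrefinement} deliver. The genuine gap is condition (v). Read level by level it is simply false: if $K=\{\mu\}$ is a singleton and $\mathcal{P}_n$ has two or more towers, the profile of $K$ at level $n$ is a single point while the set of profiles consistent with equidistribution at that level is a positive-dimensional simplex, so the former cannot ``fill out'' the latter. Read as a statement about the inverse limit (any $\nu$ equidistributing on every $\mathcal{P}_n$ has the same profile sequence as some $\mu\in K$), it is precisely the conclusion you are trying to establish, and you give no constructive mechanism to enforce it --- this is where the proposal stops short of a proof.

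The paper dissolves exactly this obstacle by a different and clean idea. Instead of trying to control the profile space at each level, it aims for a verifiable property of the limit $g$: $K$-\emph{saturation}, meaning that for every pair of clopen sets $U\sim_K V$ there is $h\in[[g]]$ with $h(U)=V$. This is engineered by enumerating all such pairs $(U_n,V_n)$ and, at stage $n$, refining the KR-partition to be compatible with $U_n,V_n$ with equally many atoms of $U_n$ and of $V_n$ in each column. Saturation forces $[[g]]$ to be dense in the group $G$ of all $K$-preserving homeomorphisms, so every $g$-invariant measure is $G$-invariant; a Hahn--Banach separation argument, using approximate divisibility one more time, then shows that a $G$-invariant measure outside $K$ cannot exist. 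This decouples ``$g$ preserves all of $K$'' (built into the partitions) from ``$g$ preserves nothing else'' (an abstract separation fact about $G$), avoiding the delicate and, as far as I can tell, unachievable profile bookkeeping that your (v) calls for. If you want to repair your proposal, replacing (v) with the $K$-saturation requirement and then proving $K_G\subseteq K$ by separation is the way to go.
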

Our construction produces a minimal homeomorphism $g$ whose set of invariant measures is $K$ and such that the topological full group $[[g]]$ is dense in the closure of the full group $[g]$ (in the terminology of \cite{Bezuglyi2002}, $g$ is \emph{saturated}). When one starts off by assuming that $K$ is the simplex of $T$-invariant measures for some minimal homeomorphism $T$, the existence of such a homeomorphism follows from a combination of theorems of Giordano--Putnam--Skau and Glasner--Weiss, see \cite{Bezuglyi2002}*{Theorem 1.6}. Here we provide an elementary proof of that fact, which seems interesting on its own.

For finite dimensional simplices, our theorem generalizes Dahl's result, showing that the assumption that extreme points are mutually singular is actually a consequence of her other hypotheses. It would be interesting to gain a better understanding of the relationship between her conditions and ours (see Remark \ref{rem:dahlscondition} at the end of the paper).

We would like to point out that the ideas of our construction are different from both Akin's and Dahl's, and relatively elementary; in particular our argument completely bypasses the use of dimension groups, Bratteli diagrams, etc. It is our hope that such ideas could be used to give elementary dynamical proofs of some other theorems of topological dynamics.\\

\noindent \emph{Acknowledgment.} The authors are grateful to G. Aubrun, I. Farah and B. Weiss for interesting comments and discussions. We would also like to thank an anonymous referee for spotting some inacurracies and making valuable suggestions.

\section{Background and notations}
Throughout, $X$ is a Cantor space; we fix some compatible distance on $X$, and whenever we mention the diameter of a set if will be with respect to this distance. We denote by $\Prob(X)$ the compact space of all probability measures on $X$, endowed with its usual topology (which comes from seeing $\Prob(X)$ as a subset of the dual of $C(X)$, endowed with the weak-$*$ topology).

The group $\Homeo(X)$ of all homeomorphisms of $X$ is a Polish group when endowed with the topology of uniform convergence on $X$. Since $X$ is the Cantor space, one can also describe this topology using Stone duality: a homeomorphism of $X$ corresponds to an automorphism of the boolean algebra of clopen sets of $X$, $\Clop(X)$; identifying $\Homeo(X)$ with automorphisms of this algebra yields that a basis of neighbourhoods of identity is given by sets of the form 
$\{g \in \Homeo(X) \colon \forall A \in \mcA \ g(A)= A\}$, where $\mcA$ runs over all clopen partitions of $X$ (note that by compactness all clopen partitions are finite). It is readily checked that the two topologies we just described coincide on $\Homeo(X)$.

\begin{defn}
Given $g \in \Homeo(X)$, its \emph{topological full group} $[[g]]$ is the group of all homeomorphisms $h$ of $X$ such that there exists a clopen partition $A_1,\ldots,A_n$ and integers $n_i$ with the property that for all $x \in A_i$ one has $h(x)= g^{n_i}(x)$.

The \emph{full group} $[g]$ is the group of all homeomorphisms $h$ such that for all $x$ there exists $n$ satisfying $h(x)=g^n(x)$.
\end{defn}

By definition, the topological full group is countable (there are only countably many clopen sets) and contained in the full group. The reason these groups are relevant to our concerns is the following, which follows easily from Proposition 2.6 of \cite{Glasner1995a}.

\begin{theorem}[Glasner--Weiss]\label{t:glasner}
Let $g$ be a minimal homeomorphism. The closure of $[g]$ in $\Homeo(X)$ consists of all homeomorphisms which preserve all $g$-invariant probability measures on $X$.
\end{theorem}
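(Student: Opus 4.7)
The plan is to establish the two inclusions separately, with Proposition 2.6 of \cite{Glasner1995a} doing the heavy lifting in the nontrivial direction.

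For the easy direction, fix $h \in [g]$ and a $g$-invariant probability measure $\mu$. Since $g$ is minimal on the Cantor space, every orbit is infinite, so for each $x$ there is a unique $n(x) \in \Z$ with $h(x) = g^{n(x)}(x)$; the level sets $A_n = \{x \in X : h(x) = g^n(x)\}$ are closed (the locus where two continuous maps agree) and form a Borel partition of $X$. A short calculation using $g$-invariance of $\mu$ and the fact that $\{g^n(A_n)\}_n$ also partitions $X$ (because $h(A_n) = g^n(A_n)$ and $h$ is a bijection) shows that $h_*\mu = \mu$. Since the set of homeomorphisms preserving $\mu$ is closed in $\Homeo(X)$, the closure $\overline{[g]}$ preserves every $g$-invariant measure.

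For the converse, let $h$ be a homeomorphism preserving every $g$-invariant measure. By the Stone-duality description of the topology recalled above, to place $h$ in $\overline{[g]}$ it suffices, given any clopen partition $\mcA = \{A_1, \ldots, A_k\}$ of $X$, to exhibit $h' \in [g]$ with $h'(A_i) = h(A_i)$ for every $i$. For each $i$, the clopen sets $A_i$ and $h(A_i)$ have the same measure under every $g$-invariant measure, so Proposition 2.6 of \cite{Glasner1995a}, taken as a black box, produces $\tau_i \in [[g]]$ with $\tau_i(A_i) = h(A_i)$; by the very definition of the topological full group, $\tau_i$ acts as a piecewise power of $g$ on a clopen subpartition of $A_i$.

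It remains to patch these together by setting $h'(x) = \tau_i(x)$ for $x \in A_i$. Since $\{h(A_i)\}_i$ partitions $X$, the map $h'$ is a bijection; piecewise continuous on a clopen partition, it is a homeomorphism of the compact space $X$. The common refinement of $\mcA$ by the pieces on which each $\tau_i$ is a constant power of $g$ exhibits $h'$ as an element of $[[g]] \subseteq [g]$, with $h'(A_i) = h(A_i)$ by construction. All the substantive content lies in the cited Glasner--Weiss proposition; without it, the main obstacle would be to show that any two clopen sets of equal measure under every $g$-invariant measure can be exchanged by a single element of $[[g]]$, after which the patching above becomes a routine formality.
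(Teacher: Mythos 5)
Your proposal is correct and follows essentially the same route as the paper: describe the topology via clopen partitions, invoke Proposition 2.6 of Glasner--Weiss on each atom $A_i$ to obtain $\tau_i$ with $\tau_i(A_i)=h(A_i)$, and patch these into a single element of $[g]$ agreeing with $h$ on the partition. The only addition is that you spell out the verification that $[g]$ preserves $g$-invariant measures (via the Borel level-set decomposition), a routine step the paper dismisses with a ``by definition.''
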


\begin{proof}
Let $H$ denote the group of all homeomorphisms which preserve each $g$-invariant measure. By definition, $H$ is closed and $[g] \subseteq H$, so that $\overline{[g]} \subseteq H$. Towards proving the converse inclusion, pick $h \in H$ and an open neighborhood $O=\{k \in H \colon \forall A \in \mathcal A \ k(A)=h(A)\}$ of $h$, where $\mathcal A$ is a clopen partition of $X$. For any $A \in \mathcal A$ there exists, by Proposition 2.6 of \cite{Glasner1995a}, some $k_A \in [g]$ such that $k_A(A)= h(A)$. Then, the map $k$ defined by setting $k(x)=k_{A}(x)$ whenever $x \in A$ is a homeomorphism (because $\mathcal A$ is a clopen partition, and $h$ as well as each $k_A$ are homeomorphisms) and belongs to $O \cap [g]$.
\end{proof}

It is not always true that $[[g]]$ is dense in $[g]$; when that happens we say that $g$ is \emph{saturated} (this follows terminology introduced in \cite{Bezuglyi2002}). 

We next recall the definition of a \emph{Kakutani--Rokhlin} partition associated to a minimal homeomorphism.

\begin{defn}
A \emph{Kakutani--Rokhlin partition} $\mcT$ associated to a minimal homeomorphism $g$ is a clopen partition of $X$ of the form $A_0 \sqcup \ldots A_k$, where each $A_i$ is further subdivided in $B_{i,0}, \ldots, B_{i,j_i}$ (possibly $j_i=0$) and for all $i$ and all $r \in \{0,\ldots,j_i-1\}$, $g(B_{i,r}) = B_{i,r+1}$. 

The union of all $B_{i,0}$ is called the \emph{base} of the partition, and the union of all $B_{i,j_i}$ is its \emph{top}. Each $A_i$ is called a \emph{column} of the partition, and the definition ensures that $g$ must map the top of the partition onto its base.

\end{defn}

To obtain such a partition, one can first choose a clopen base $B$; then subdivide it into $B_1,\ldots,B_N$, with $B_i$ made up of all $x \in B$ such that $i=\min \{j >0 \colon g^j(x) \in B\}$; and set $B_{i,j}= g^j(B_i)$ for all $j \in \{0,\ldots,i-1\}$. 

Below we represent a Kakutani--Rokhlin partition; the arrows correspond to the action of the homeomorphism on the partition, which is prescribed on all atoms except those contained in the top (all we know there is that the top is mapped onto the base). The base is colored in blue and the top in red; note that on the picture the base and top do not intersect. They are allowed to, but will not intersect as soon as we take a small enough base.

\begin{figure}[!h]
	\center
							\includegraphics[trim =0mm 0mm 0mm 0mm, clip,height=3cm]{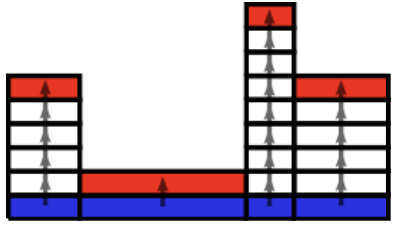}
							\caption{A Kakutani--Rokhlin partition}
\end{figure}

It is a standard, important fact in topological dynamics that, given a minimal homeomorphism $g$, one can produce a sequence of Kakutani--Rokhlin partitions for $g$ whose atoms generate the algebra of clopen sets, and whose top and base have vanishing diameter. Such a sequence naturally defines a basis of neighborhoods of $g$ in $\Homeo(X)$. 

Below, to obtain a minimal homeomorphism with prescribed set of invariant measures, we will define a sequence of partitions which will turn out to be Kakutani--Rokhlin partitions for that homeomorphism.

We recall the definition of a dynamical simplex given in the introduction.

\begin{defn}
We say that a nonempty set $K$ of probability measures on $X$ is a \emph{dynamical  simplex} if it satisfies the following conditions:
\begin{itemize}
\item $K$ is compact and convex.
\item All elements of $K$ are atomless and have full support.
\item $K$ is \emph{good}, i.e.\ for any two clopen sets $A,B$ such that $\mu(A) < \mu(B)$ for all $\mu \in K$, there exists a clopen subset $C \subseteq B$ such that $\mu(C)=\mu(A)$ for all $\mu \in K$.
\item $K$ is \emph{approximately divisible}, i.e.\ for any clopen set $A$, any integer $n$ and any $\varepsilon >0$, there exists a clopen $B \subseteq A$ such that $n \mu(B) \in [\mu(A)-\varepsilon,\mu(A)]$ for all $\mu \in K$.
\end{itemize}
\end{defn}

Given a set $K$ of probability measures, and two clopen sets $A,B$, we use the notation $A \sim_K B$ to denote the fact that $\mu(A)=\mu(B)$ for all $\mu \in K$.
Note that, modulo goodness, approximate divisibility may be stated equivalently by saying that there exist clopen subsets $B_1,\ldots,B_n$ such that $B_1,\ldots,B_n$ are disjoint, contained in $A$, $B_i \sim_K B_j$ for all $i,j$, and $A \setminus \bigcup B_i$ has measure less than $\varepsilon$ for all $\mu \in K$.

We should point out again that we borrow the term ``dynamical simplex'' from Dahl \cite{Dahl2008a}, and that our definition is, at least formally, different from Dahl's : the definition given in \cite{Dahl2008a} includes the assumption that $K$ is a Choquet simplex and extreme points of $K$ are mutually singular, and does not mention approximate divisibility. We briefly discuss the relations between our conditions and Dahl's in Remark \ref{rem:dahlscondition} at the end of the paper.

We note that, when $K$ has finitely many extreme points, the assumption of approximate divisibility is redundant, as follows from the proposition below.

\begin{prop}
Assume that $K$ is a compact subset of $\Prob(X)$, and all the measures in $K$ are atomless and have full support. Then the following properties hold.
\begin{enumerate}
\item For any nonempty clopen set $A$, $\inf \{\mu(A) \colon \mu \in K\} >0$.
\item For any $\varepsilon >0$, there exists $\delta >0$ such that, for any clopen set $A$ of diameter less than $\delta$, one has $\mu(A) \le\varepsilon$ for all $\mu \in K$.
\item If $K$ has finitely many extreme points, then $K$ is approximately divisible.
\end{enumerate}
\end{prop}

\begin{proof}
The first two items are well-known when $K$ is the simplex of all invariant measures for a minimal homeomorphism, and the proofs are simple and similar to that case. We give them for the reader's convenience.

For the first item, assume that there exists a sequence $(\mu_n)$ of elements of $K$ and a nonempty clopen set $A$ such that $\mu_n(A)$ converges to $0$. Then by compactness of $K$ we find some $\mu \in K$ such that $\mu(A)=0$, contradicting the fact that $\mu$ has full support (it is perhaps worth recalling that a sequence $\mu_n$ of elements of $\Prob(X)$ converges to $\mu \in \Prob(X)$ exactly if $\mu_n(A)$ converges to $\mu(A)$ for all clopen set $A$). 

The second item requires a bit more work; we follow the argument of \cite{Bezuglyi-Medynets2008}*{Proposition 2.3} and assume for a contradiction that there exists a sequence of clopen subsets $(A_n)$ of vanishing diameter and $\varepsilon > 0$ such that $\mu_n(A_n) \ge \varepsilon$ for all $n$. Up to passing to a subsequence, we may assume that $(A_n)$ converges to a singleton $\{x\}$ for the Hausdorff distance on compact subsets of $X$, and that $\mu_n$ converges to $\mu \in K$. Let $O$ be any clopen neighborhood of $x$; we will have $A_n \subseteq O$ for all large enough $n$, so that $\mu_n(O) \ge \varepsilon$ for all large $n$. As above, this implies that $\mu(O) \ge \varepsilon$ for all $n$, so that (taking the intersection over all clopen neighborhoods of $x$) $\mu(\{x\})\ge \varepsilon$, contradicting the fact that $\mu$ is atomless.

To see why the third point holds, we argue in a way similar to \cite{Dahl2008a}. Fix $\varepsilon >0$ and let $\mu_1,\ldots,\mu_n$ denote the extreme points of $K$. Lyapunov's theorem on vector measures tells us that 
$$\{(\mu_1(B),\ldots,\mu_n(B)) \colon B \text{ a Borel subset of } A\}$$ is convex. In particular, there exists a Borel subset $B$ of $A$ such that $\mu_i(B)= \frac{1}{n} \mu_i(A)$ for $i \in \{1,\ldots,n\}$. Using the regularity of $\mu_1,\ldots,\mu_n$ we obtain a clopen subset $C \subset A$ such that $\mu_i(C) \in [\frac{\mu_i(A)-\varepsilon}{n}, \frac{\mu_i(A)}{n}]$ for all $i$, which is what we wanted.
\end{proof}

\begin{remark}
If we had assumed that the extreme points of $K$ were mutually singular, we would not have needed Lyapunov's theorem to conclude that $K$ is approximately divisible when $K$ has finitely many extreme points; but we do not need to make this assumption. We also do not include the assumption that $K$ is a Choquet simplex in our definition of a dynamical simplex, as we do not need it in the arguments. Both these assumptions are clearly necessary for $K$ to be the simplex of all invariant measures of an homeomorphism, hence follow from the others, given the main result of the paper.

We do not know if, in general, approximate divisibility is a consequence of the other assumptions (to which one could add the fact that $K$ is a Choquet simplex with mutually singular extreme points, if necessary) as is the case when $K$ is finite-dimensional. The proof above does not seem to adapt: Lyapunov's theorem does extend to more general situations, but this extension (known as Knowles' theorem, see \cite{Diestel1977}*{IX.1.4}) requires the existence of a finite control measure~$\nu$, which will exist only when $K$ has finitely many extreme points. More precisely, one would like to apply the Extension Theorem \cite{Diestel1977}*{I.5.2} to the vector-valued measure $F:\Clop(X)\to C(K)$, $F(A)(\mu)=\mu(A)$, but then, assuming $K$ has countably many mutually singular extreme points, it is not difficult to see that the second item of the theorem fails. Nevertheless, one can certainly prove that approximate divisibility is redundant in some infinite-dimensional situations, for instance when the extreme boundary of $K$ has only one non-isolated point (this was remarked during a conversation with I. Farah).
\end{remark}

\begin{prop}
Assume that $g$ is a minimal homeomorphism of $X$, and that $K$ is the simplex of all $g$-invariant probability measures. Then $K$ is a dynamical simplex.
\end{prop}

\begin{proof}
Clearly $g$-invariant measures always form a compact convex subset of $\Prob(X)$, and when $g$ is minimal any $g$-invariant measure must be atomless and have full support. The fact that the simplex $K$ is good follows from the theorem of Glasner and Weiss recalled in the introduction, so we only need to explain why $K$ is approximately divisible. 

Start from a nonempty clopen set $A$, and consider the map $g_A$ defined by $g_A(x)= g^n(x)$, where $n= \min \{i >0 \colon g^i(x) \in A\}$. Then $g_A$ is a homeomorphism of $A$, and is minimal. The restriction of any $\mu \in K$ defines a $g_A$-invariant measure on $A$, which we still denote by $\mu$.
Pick $N\ge n$ such that $n/N < \varepsilon$. Since $g_A$ is aperiodic, we can find a clopen set $U$ such that $U, g_AU, \ldots, g_A^N U$ are disjoint. In particular, $\mu(U)$ is less than $\mu(A)/N$ for all $\mu \in K$. 

Now, consider a Kakutani--Rokhlin partition of $A$ associated to $g_A$, with base~$U$. Let $C_0,\ldots, C_M$ denote the columns of this partition; we have $C_i= C_{i,0} \sqcup C_{i,1} \ldots \sqcup C_{i,n_i}$, with $n_i \ge N \ge n $. Denote $n_i +1 = k_i n+p$, $p \in \{0,\ldots,n-1\}$. 
For $i \in \{0,\ldots,M\}$ and $j \in \{0,\ldots,n-1\}$, let $B_{i,j}$ denote the union of the levels of $C_i$ of height $<k_i n$ and equal to $j$ modulo $n$, and let $B_j$ be the union of all $B_{i,j}$. Then we have $B_{j} \sim_K B_l$ for all $j,l$, and the complement of their union is of measure less than $n \mu(U) \le \varepsilon \mu(A)$ for all $\mu \in K$.

The following picture is supposed to illustrate the procedure we just described: below, $n=3$; the domains in blue, red and green are $\sim_K$ equivalent, and the measure of the remainder is less than $3$ times the measure of the base, for all $\mu \in K$.
\begin{figure}[!h]
	\center
							\includegraphics[trim =0mm 0mm 0mm 0mm, clip,height=3cm]{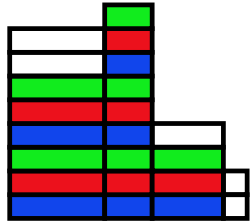}
							\caption{A partition in three $\sim_K$ pieces plus a rest of small measure}
\end{figure}

\end{proof}

The following proposition states an homogeneity property of the algebra $\Clop(X)$ in relation to good simplices.

\begin{prop}\label{prop:homogeneity}
Assume $K$ is a good simplex of probability measures on a Cantor space~$X$ with full support. Let $G= \{g \in \Homeo(X) \colon \forall \mu \in K \ g_* \mu = \mu\}$. If $U,V$ are clopen sets with $U \sim_K V$, then there is $g\in G$ such that $gU=V$.
\end{prop}
\begin{proof}
We construct a $K$-preserving automorphism $g$ of the algebra $\Clop(X)$ by a standard back-and-forth argument. Let $\{A_n\}$, $\{B_n\}$ be two enumerations of $\Clop(X)$, with $A_0=U$ and $B_0=V$. Let $\mcA_0$ be the partition of $X$ into $A_0$ and its complement. We set $gA_0=B_0$, $g(X\setminus A_0)=X\setminus B_0$. Now assume inductively that we have defined $g$ on the atoms of a finite clopen partition $\mcA_n$ such that: (i)~the sets $\{A_i\}_{i<n}$ are contained in the algebra generated by $\mcA_n$, (ii)~the image $g\mcA_n=\{gC:C\in\mcA_n\}$ is a partition of $X$, (iii)~$\mu(gC)=\mu(C)$ for all $C\in\mcA_n$, $\mu\in K$, and (iv)~the sets $\{B_i\}_{i<n}$ are contained in the algebra generated by $g\mcA_n$.

Let $C_1,\dots,C_m$ be the atoms of the partition $\mcA_n$. We take $C^0_i=C_i\cap A_n$, $C^1_i=C_i\setminus A_n$. Since $K$ is good, we can find $D^0_i\subseteq gC_i$ such that $\mu(D^0_i)=\mu(C^0_i)$ for all $\mu\in K$; we set $D^1_i=gC_i\setminus D^0_i$. Now we take $D^{j,0}_i=D^j_i\cap B_n$, $D^{j,1}_i=D^j_i\setminus B_n$. Again, since $K$ is good, we can find a clopen partition $\mcA_{n+1}=\{C^{j,k}_i\}^{j,k<2}_{i<m}$ such that $C^{j,k}_i\subseteq C^j_i$ and $\mu(C^{j,k}_i)=\mu(D^{j,k}_i)$ for each $i,j,k$ and all $\mu\in K$. Then we extend the definition of $g$ to $\mcA_{n+1}$ by setting $gC^{j,k}_i=D^{j,k}_i$. The construction ensures that properties (i)-(iv) are preserved.

At the end we get a $K$-preserving automorphism of $\Clop(X)$ sending $U$ to $V$. By Stone duality, this induces an homeomorphism $g$ as required.\end{proof}


%
%

\section{Construction of a saturated element}

In this section and the next, we fix a dynamical simplex of measures $K$ on a Cantor space $X$, and we let $G$ denote the group of all homeomorphisms $g$ of $X$ such that $g_* \mu= \mu$ for all $\mu \in K$. 

\begin{defn}
We say that $g \in G$ is \emph{$K$-saturated} if for any clopen sets $U,V$ such that $U \sim_K V$ there exists $h \in [[g]]$ with $h(U)=V$.
\end{defn}

\begin{prop}
Assume that $g$ is $K$-saturated. Then $[[g]]$ is dense in $G$ and $g$ is minimal.
\end{prop}

\begin{proof}
It is easy to see that $[[g]]$ being dense in $G$ is equivalent to $g$ being $K$-saturated once once has Theorem \ref{t:glasner} in hand. To see that a $K$-saturated element is minimal, pick any nonempty clopen set $U$. Given any $x \in X$, a sufficiently small clopen neighborhood $V$ of $x$ will be such that $\sup_K \mu(V)  < \inf_K \mu(U)$, thus there exists $g \in G$ such that $gU \supseteq V \ni x$. Hence $X = \bigcup_{g \in G} gU$.

Now, for all $h \in G$ there exists $k \in [[g]]$ such that $kU=gU$, so that $X= \bigcup_{k \in [[g]]} kU$. Since for all $k \in [[g]]$ we have $kU \subseteq \bigcup_{i \in \Z} g^i U$, we obtain that $X= \bigcup_{i \in \Z} g^i U$. This means that $X$ is the unique nonempty open $g$-invariant set, which is the same as saying that $g$ is minimal.
\end{proof}

Next, we introduce partitions which resemble Kakutani--Rokhlin partitions. Essentially, we are trying to build a homeomorphism from a sequence of partitions, rather than the other way around.

\begin{defn}
A \emph{KR-partition} $\mcT$ is a clopen partition of $X$ of the form $A_0 \sqcup \ldots A_k$, where each $A_i$ is further subdivided into $B_{i,0}, \ldots, B_{i,j_i}$ (possibly $j_i=0$) and for all $i$ and all $r,s \in \{0,\ldots,j_i\}$ $B_{i,r} \sim_K B_{i,s}$. 

The union of all $B_{i,0}$ is called the \emph{base} of the partition, and the union of all $B_{i,j_i}$ is its \emph{top}. Each $A_i$ is called a \emph{column} of the partition.

To each KR-partition, one can associate the algebra $\mcA_{\mcT}$ whose atoms are all $B_{i,r}$ with $r< j_i$, and the top of the partition; and the partial automorphism of $\Clop(X)$ with domain $\mcA$ which maps each $B_{i,r}$ to $B_{i,r+1}$ for all $i$ and $r < j_i$, and maps the top of the partition to its base. Note that the ordering of atoms within each column matters.

\end{defn}

We say that a KR-partition $\mcS$ refines a KR-partition $\mcT$ if the base (respectively, top) of $\mcS$ is contained in the base (respectively, top) of $\mcT$, and $\mcS$-towers are obtained by cutting and stacking towers of $\mcT$ on top of each other (see Figure \ref{favorable figure}). More precisely: for each column $C=(D_j)_{0\leq j\leq J}$ of $\mcS$ there exist columns $A_{i_k}=(B_{i_k,j})_{0\leq j\leq j_{i_k}}$ of $\mcT$ ($0\leq k\leq K$) and clopen subsets $S^k_j\subseteq B_{i_k,j}$ such that $J=\sum_{0\leq k\leq K}(j_{i_k}+1)$ and such that, for each $k$, we have $D_{j+\sum_{l<k}(j_{i_l}+1)}=S^k_j$ for every $0\leq j\leq j_{i_k}$.

Note that if $\mcS$ refines $\mcT$ then the algebra and partial automorphism associated to $\mcS$ refine those that are associated to $\mcT$.

\begin{prop}\label{prop:towerdiameter}
Given a KR-partition $\mcT$, and $\varepsilon >0$, there exists a KR-partition $\mcS$ which refines $\mcT$ and which is such that the base and top of $\mcS$ both have diameter less than $\varepsilon$.
\end{prop}

\begin{proof}
We let again $A_0,\ldots,A_k$ denote the columns of $\mcT$, $B_{i,0}$, $B_{i,j_i}$ denote respectively the base and top of the $i$-th column, and $B$ be the base of $\mcT$. 

We begin by describing how to deal with a very favorable particular case, where there exists an integer $n\geq 2$ such that
$\mu(B_{0,0})=\mu(B_{1,0})= \frac{1}{n}\mu(B)$ and both $B_{0,0}$ and $B_{1,j_1}$ have diameter less than $\varepsilon$. Let $R=B \setminus (B_{0,0} \cup B_{1,0})$. Then we have $\mu(R)= (n-2) \mu(B_{0,0})$ for all $\mu \in K$, so, by goodness, as long as $n>2$, we can find a copy $C$ of $B_{0,0}$ inside $R$. Now, the bases of $A_2,\ldots,A_k$ induce a partition of~$C$, $C=\bigsqcup_{j=2}^k (C\cap B_{j,0})$. Furthermore, by goodness (or homogeneity), we can find an equivalent cutting of $B_{0,0}$ into pieces $P_j\sim_K (C\cap B_{j,0})$. We pass on this cutting of $B_{0,0}$ throughout the column $A_0$, and similarly we pass on the cutting of $C$ throughout the columns $A_2,\ldots,A_k$. Finally, we stack each new column starting with $C\cap B_{j,0}$ on the top of the new column based on $P_j$; this gives us a refinement $\mcT'$ of $\mcT$.

As long as $n'=n-1>2$, we repeat this process, only that now we find copies $C_j\subseteq R'=R\setminus C$ of each $P_j$. As before, we cut them with the bases of $A_2,\ldots,A_k$, then imitate this cutting on the corresponding $P_j$ and pass it on throughout the columns; then we stack the new columns based on subsets of $R'$ on top of the corresponding new columns based on subsets of $B_{0,0}$. Once this process has been repeated $n-2$ times, we apply it, lastly, on $A_1$. What we obtain is a refinement $\mcS$ of $\mcT$ whose base is $B_{0,0}$ and whose top is $B_{1,j_1}$.


The picture below illustrates the procedure we just described. The column containing the new base is colored in blue, and the one containing the top is red; we keep track of what happens to them in the picture (in a very simple case for readability).

\begin{figure}[!h]\label{favorable figure}
	\center
							\includegraphics[trim =0mm 0mm 0mm 0mm, clip,height=3cm]{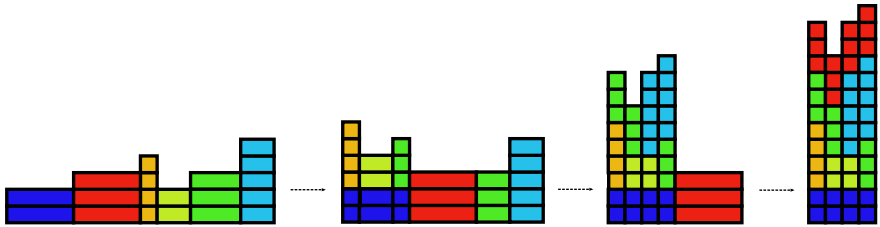}
							\caption{Cutting and stacking in the favorable case}
\end{figure}

That is the last picture that we will include in this article, as the next arguments are a bit harder to illustrate; nevertheless, we invite the reader to draw her own pictures, since we feel that the ideas become more transparent in this way.

To deal with the general case, we use the fact that $K$ is good and approximately divisible to reduce to this favorable case, modulo a small error (which would not appear if $K$ were exactly divisible). First, by cutting $B_{0,0}$ (and throughout $A_0$) if necessary, we can ensure that $B_{0,0}$ has small diameter. Moreover, by picking a subset of $B_{0,j_0}$ of small diameter and cutting again, we can ensure that both the top and base of the first column of our partition have small diameter. Cutting yet again, we make sure that the union $B_{0,j_0}\cup B_{1,j_1}$ has small diameter. Cutting and using goodness once more, we ensure $\mu(B_{0,j_0})= \mu(B_{1,j_1})$ for all $\mu \in K$.

Next, pick some integer $n$ such that $\frac{1}{n} < \mu(B_{0,0}) $ for all $\mu \in K$. Using the fact that $K$ is good and approximately divisible, we find clopen sets $C_0\sim_K C_1\sim_K\ldots\sim_K C_{n-1}$ contained in $B$, pairwise disjoint, such that $C_0 \subseteq B_{0,0}$, $C_1 \subseteq B_{1,0}$ and $E:=B \setminus \bigcup_{i=0}^{n-1} C_i \subseteq B_{0,0}$. We cut $B_{0,0}$ into two pieces, one of which is the error~$E$, inducing a further KR-partition, one column of which has base $E$. We set apart this column $A_E$, that is, we consider $Y=X \setminus A_E$. Then, our current KR-partition of $X$ induces a KR-partition of $Y$; cutting one last time we obtain columns based on $C_0$ and on $C_1$, which we set to be, respectively, the first and the second column of that partition. We have thus obtained a KR-partition (of $Y$) which satisfies the assumptions of the favorable case described above. Applying the stacking procedure given for that case, we obtain a new KR-partition of $Y$ whose base is contained in $B_{0,0}$ and whose top is contained in $B_{1,j_1}$. Finally, considering this partition together with the column $A_E$, we get a KR-partition of $X$ whose base (contained in $B_{0,0}$) and whose top (contained in $B_{0,j_0}\cup B_{1,j_1}$) have both small diameter.
\end{proof}

We say that a partition is \emph{compatible} with a clopen set $U$ if $U$ is a union of atoms of the partition.

\begin{prop}\label{prop:towerrefinement}
Given a KR-partition $\mcT$, and two clopen subsets $U\sim_K V$, one can find a KR-partition $\mcS$ refining $\mcT$, compatible with $U$ and $V$, and such that in each column there are as many atoms contained in $U$ as atoms contained in $V$.
\end{prop}

Note that then, if $g$ is any element of $G$ which extends the partial automorphism associated to $\mcS$, there exists an $h \in [[g]]$ such that $h(U)=V$ (because one can map $U$ to $V$ while only permuting atoms within each column of $\mcS$).

\begin{proof}

First, note that by goodness there is a KR-partition $\mcS$ refining $\mcT$ and compatible with $U,V$: consider the algebra generated by $\mcT$ and $U,V$, then pull back the associated partion of atoms of $\mcT$ 
to the base of $\mcT$ (via an automorphism $g_{i,s}\in G$ mapping $B_{i,s}$ to $B_{i,0}$), and push it back up (using $g_{i,s}^{-1}$). We obtain a new KR-partition~$\mcS$, refining $\mcT$, compatible with $U$ and $V$ (each column has been subdivided into smaller columns, and no stacking has taken place).

Now, for all such KR-partitions, we can associate to any column $C$ the numbers
$$u_C= \sharp\{\text{atoms of } C \text{ contained in } U\},\ v_C = \sharp\{\text{atoms of } C \text{ contained in } V\},$$
and $n_C=u_C-v_C$. Our aim is to find a KR-partition with $u_C=v_C$ for all columns. We distinguish the columns of the following types: $\mcC^+$, the set of columns with $n_C>0$, and $\mcC^-$, those where $n_C<0$. We let $n_\mcS$ denote the maximum value of $|n_C|$ among all columns $C$, and finally let $n$ denote the smallest possible $n_{\mcS}$ among all $\mcS$ refining $\mcT$ and compatible with $U,V$.

We suppose for a contradiction that $n\neq 0$, and we pick $\mcS$ with $n_\mcS=n$. Without loss of generality, we assume that $n=n_C$ for some $C\in\mcC^+$, and we consider the set $\mcD$ of all columns $C$ such that $n_C=n$. Let $B$ be the union of all the bases of columns in $\mcD$, and $B'$ the union of the bases of elements of $\mcC^-$. Then we observe that either $\mu(B)=\mu(B')$ for all $\mu \in K$, or $\mu(B)< \mu(B')$ for all $\mu \in K$ (otherwise $\mu(U)$ and $\mu(V)$ would not be equal). Thus one can build a new KR-partition by cutting and stacking on top of each column of $\mcD$ some element of $\mcC^-$ (just map arbitrarily the union of the tops of elements of $\mcD$ into the union of the bases of $\mcC^-$, then refine accordingly). This has the effect of producing a new KR-partition such that every column in $\mcC^+$ satisfies $n_C<n$. Doing the same (if necessary) with $\mcC^-$, we obtain a contradiction to the minimality of $n$.
\end{proof}

The previous two propositions provide us with the tools to construct the $K$-saturated homeomorphism we were looking for.

\begin{prop}
There exists a $K$-saturated element in $G$.
\end{prop}

\begin{proof}
Fix an enumeration $(U_n,V_n)$ of all pairs of clopen sets $(U,V)$ such that $U \sim_K V$. Using Propositions \ref{prop:towerdiameter} and \ref{prop:towerrefinement}, we build a sequence of $KR$-partitions $\mcS_n$ with the following properties:
\begin{enumerate}
\item For all $n$, $\mcS_{n+1}$ refines $\mcS_n$.
\item For all $n$, $\mcS_n$ is compatible with $U_n, V_n$, and in each column of $\mcS_n$ there are as many atoms contained in $U_n$ as atoms contained in $V_n$.
\item The diameters of the base and top of $\mcS_n$ converge to $0$.
\end{enumerate}

Then, there exists a unique $g \in \Homeo(X)$ which extends the partial automorphisms associated to $\mcS_n$. The construction ensures that $g \in G$, and that $g$ is $K$-saturated.

\end{proof}

\begin{remark}
The set of all $K$-saturated homeomorphisms, as well as the set of all minimal homeomorphisms in $G$, are $G_\delta$ subsets of $G$. The argument above proves that the closure of the set of $K$-saturated elements contains all minimal elements of $G$; thus, in $G$, a generic minimal homeomorphism if $K$-saturated. It would be interesting to determine precisely the closure of the set of all minimal homeomorphisms. It is tempting to believe that it corresponds to the set of all $g \in G$ such that, for any clopen $A$ different from $\emptyset$ and $X$, one has $g(A) \ne A$ (see \cite{Bezuglyi2006}*{Theorem~5.9} for the analogous result in $\Homeo(X)$).
\end{remark}

So far, we have managed to build a $K$-saturated, hence minimal, element which preserves all measures in a given dynamical simplex $K$. It is \emph{a priori} possible that this element preserves measures not belonging to $K$; saturation prevents this from happening. That was the original motivation for trying to build a $K$-saturated element of $G$ rather than merely a minimal homeomorphism belonging to $G$. 
To deal with this issue, we have one remaining task: proving that the set of $G$-invariant probability measures, which by definition contains $K$, in fact coincides with $K$.

\section{Saturated elements cannot preserve unwanted measures}

We will denote the simplex of $G$-invariant probability measures by $K_G$. Given $g\in G$, the simplex of $g$-invariant probability measures will be denoted $K_g$.

\begin{prop}
Let $g$ be a $K$-saturated element. Then $K_g=K_G$.
\end{prop}

\begin{proof}
Clearly, $K_G \subseteq K_g$. For any $\mu \in K_g$ and any $h \in [[g]]$ we have $h_*\mu= \mu$. Since $\{h \colon h_*\mu= \mu\}$ is closed in $\Homeo(X)$, and the closure of $[[g]]$ is $G$ since $g$ is $K$-saturated, we obtain as desired that $h_*\mu = \mu$ for all $h \in G$.
\end{proof}

The last remaining piece of our puzzle is thus the following proposition.

\begin{prop}
We have $K_G\subset K$.
\end{prop}

\begin{proof}
We proceed by contradiction, and assume that $\nu \not \in K$ is such that $g_*\nu=\nu$ for all $g \in G$. By Proposition \ref{prop:homogeneity}, if $U,V$ are clopen sets with $U \sim_K V$, then $\nu(U)=\nu(V)$.

Note first that, if $\mu(A) < \frac{1}{n}$ for all $\mu \in K$, then 
there exist disjoint $A_1,\ldots,A_n$ such that $A_i \sim_K A$ for all $i \in \{1,\ldots,n\}$; since by the remark in the previous paragraph $\nu(A_1)=\ldots=\nu(A_n)=\nu(A)$, we also have $\nu(A) < \frac{1}{n}$. This observation will be used twice below.

Using the Hahn--Banach theorem (see for instance Theorem 3.4(b) of \cite{Rudin1973} for the version we use here, valid in locally convex Hausdorff topological vector spaces; note that we endow the dual of $C(X)$ with the weak* topology, so that its topological dual naturally identifies with $C(X)$, see Theorem 3.10 of \cite{Rudin1973}), we know that there exists a continuous function $f \colon X \to \R$ such that 
$$\forall \mu \in K \  \int_X f d\mu < \int_X f d \nu  .$$
Replacing $f$ by $f + \max\{ |f(x)| \colon x \in X \}$, and using the fact that all our measures are probability measures, we may assume that $f(x)\geq 0$ for all $x$. Using uniform continuity of $f$, we may further assume that $f$ only takes finitely many values, which are all non-negative rational numbers; multiplying by a large enough integer, we finally reduce to the case when $f$ takes finitely many integer values. Hence we have finitely many clopen sets $(A_i)_{1 \le i \le N}$ and positive integers $n_i$ such that
$$\forall \mu \in K \  \sum_{i=1}^N n_i \mu(A_i) < \sum_{i=1}^N n_i \nu(A_i).$$
Pick integers $p,q >1$ such that 
$$ \forall \mu \in K \  \sum_{i=1}^N n_i \mu(A_i) < \frac{p}{q} <\sum_{i=1}^N n_i \nu(A_i) .$$  
Using the fact that $K$ is approximately divisible, we may find for all $i$ some clopen sets $B_{i,1} \sim_K B_{i,2} \sim_K\ldots \sim_K B_{i,p}$ contained in $A_i$ such that 
$\mu(A_i \setminus \bigcup B_{i,j})$ is arbitrarily small for all $\mu \in K$, hence also $\nu(A_i \setminus \bigcup B_{i,j})$ is arbitrarily small.

Thus, $\frac{\nu(A_i)}{p}-\nu(B_{i,1})$ can be made arbitrarily small, so we can ensure that 
$$\forall \mu \in K \  \sum_{i=1}^N n_i \mu(B_{i,1}) < \frac{1}{q} < \sum_{i=1}^N n_i \nu(B_{i,1}) . $$

We can then build a set $B$ which is a disjoint union of $n_1$ copies of $B_{1,1}$ (that is, $n_1$ clopen sets which are $\sim_K$-equivalent to $B_{1,1}$), $n_2$ copies of $B_{2,1}$, etc; we have
$$ \forall \mu \in K \ \mu(B) < \frac{1}{q} \quad \text{ and } \quad \nu(B) > \frac{1}{q}\ .$$
This contradicts the observation made at the beginning of the proof.
\end{proof}

\begin{cor}
Assume that $g$ is a $K$-saturated element of $G$. Then $K=K_g$.
\end{cor}

\begin{proof}
We have $K\subset K_g$ by definition of $G$, and the converse inclusion follows from the previous propositions.
\end{proof}

We have finally proved Theorem \ref{t:mainresult}. 

\begin{remark}\label{rem:dahlscondition} Using the idea of the proof of the previous proposition, one can check that goodness and approximate divisibility imply that any affine function on $K$ of the form $\mu\mapsto\int_X fd\mu$, where $f\in C(X,[0,1])$, can be approximated arbitrarily well by an affine function of the form $\mu\mapsto\int_X\chi_B$, where $\chi_B$ is the characteristic function of a clopen set. This implies, in the terminology of Dahl, that $G(K)$ is dense in $\text{Aff}(K)$. 

It seems clear that $G(K)$ being dense in $\text{Aff}(K)$ and approximate divisibility are related conditions; it appears to follow from Dahl's arguments in \cite{Dahl2008a} that, whenever $K$ is a Choquet simplex and $G(K)$ is dense in $\text{Aff}(K)$, $K$ must be approximately divisible. 
\end{remark}

We conclude by discussing a further line of enquiry suggested by our work here. As mentioned at the end of the introduction, our argument gives an elementary proof of the fact that, given a minimal homeomorphism $g$, there exists a saturated homeomorphism $h$ preserving the same measures as $g$. It follows from a theorem of Krieger \cite{Krieger1980}, the proof of which is relatively short and elementary, that any two saturated homeomorphisms preserving the same measures are orbit equivalent. 
It then becomes interesting to try and find a dynamical proof of the fact that, if $g$ is a minimal homeomorphism, $S$ is a saturated minimal homeomorphism, and $K_g=K_S$, then $g$ and $S$ are orbit-equivalent : combining such a proof, the main result of this paper, and Krieger's theorem, one would obtain a new dynamical proof of the Giordano--Putnam--Skau relating orbit equivalence and invariant measures.

\bibliography{mybiblio}

\end{document}